\documentclass[12pt, reqno]{amsart}
\usepackage{amscd}
\usepackage{bbm}
\usepackage{dsfont}
\usepackage{enumerate}
\usepackage{latexsym}
\usepackage{srcltx}
\usepackage{amsfonts}
\usepackage{amssymb}
\usepackage{datetime}
\usepackage{setspace}
\usepackage{enumerate}
\usepackage{amsthm}
\usepackage{graphicx}
\usepackage{epstopdf}

\newtheorem{theorem}{Theorem}[section]

\newtheorem{corollary}[theorem]{Corollary}

\theoremstyle{definition}

\newtheorem{definition}[theorem]{Definition}

\newtheorem{remark} [theorem] {Remark}

\begin{document}
\title{Compact disjointness preserving operators on Banach $C(K)$-modules.}

\author{Arkady Kitover}

\address{Community College of Philadelphia, 1700 Spring Garden St., Philadelphia, PA, USA}

\email{akitover@ccp.edu}

\author{Mehmet Orhon}

\address{University of New Hampshire, 105 Main Street
Durham, NH 03824}

\email{mo@unh.edu}

\subjclass{Primary 47B07; Secondary 46B42, 47B60}

\date{\today}

\keywords{Compact, disjointness preserving, Banach $C(K)$-module}

\maketitle

\markboth{Arkady Kitover and Mehmet Orhon}{Compact disjointness preserving operators.}
\begin{abstract} We show that some well known results concerning compact disjointness preserving operators on Banach lattices can be extended to the more general framework of finitely generated Banach $C(K)$-modules. Specifically, we establish a series representation for such operators analogous to the de Pagter - Wickstead and Arenson–Kitover theorems and provide some details about their spectra.
\end{abstract}
\section{Introduction and preliminaries}

Compact lattice homomorphisms acting between two Banach lattices were characterized independently by B. de Pagter (\cite{Pa}) and
A.W. Wickstead (\cite{Wi}). To state their result we recall the following definition.

\begin{definition} \label{d1}
  Let $X$ be a vector lattice. The element $x \in X$ is called an atom if the principal ideal $I_x$ generated by $x$ is 
  one-dimensional.
\end{definition}

\begin{theorem} \label{t1} (de Pagter - Wickstead) Let $X,Y$ be Banach lattices and $T: X \rightarrow Y$ be a compact lattice homomorphism. Then there are pairwise disjoint positive elements $y_1, y_2, \ldots \in Y$ and pairwise disjoint positive atoms $F_1, F_2, \ldots \in X^\prime$ such that
\begin{equation}\label{eq1}
  T = \sum \limits_{n=1}^\infty F_n \otimes y_n,
\end{equation}
  where the series in~(\ref{eq1}) converges in the operator norm.
  \end{theorem}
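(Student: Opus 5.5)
We would prove Theorem~\ref{t1} by dualizing. Since $T$ is a compact lattice homomorphism, $T^\prime: Y^\prime \to X^\prime$ is compact, positive, and interval preserving: $T^\prime[0,g]=[0,T^\prime g]$ for $g\ge 0$. The key observation is that such an operator maps into the span of atoms. Let $B=T^\prime(\text{unit ball of } Y^\prime)$, which is relatively compact. For $g \ge 0$ with $\|g\|\le 1$, the interval $[0,T^\prime g]=T^\prime[0,g]$ lies in $B$, so every order interval $[0,f]$ with $f\in T^\prime(Y^\prime_+)$ is norm compact.

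A positive element $f$ of a Banach lattice whose interval $[0,f]$ is norm compact lies in the band generated by atoms. Moreover, $f$ is the norm-convergent sum of its components along countably many atoms. The reason is as follows. The band projections $P_A f$ onto pieces of $f$ form a relatively compact set. If $f$ had a nonzero atomless part, we could split it repeatedly into two disjoint pieces of roughly equal size, or at least into infinitely many disjoint pieces bounded below in norm. That produces a sequence in $[0,f]$ with no convergent subsequence, since disjoint elements $u,v\ge 0$ satisfy $\|u-v\|=\|u+v\|\ge\|u\|$. Making this splitting argument rigorous in a general Banach lattice, without Dedekind completeness of $X^\prime$, is the step I expect to be the main obstacle. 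Here, however, $X^\prime$ is Dedekind complete, so band projections exist. For an atomless band we would use a Zorn argument together with the fact that in such a band every nonzero positive element dominates two disjoint nonzero positive elements. Iterating this gives an infinite disjoint sequence in $[0,f]$, and compactness forces their norms to tend to $0$. To get a contradiction we must refine the iteration so that the norms stay bounded below; to do so we choose at each stage the piece of larger norm and use order continuity of the norm on $X^\prime$ restricted to compact intervals.

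Once the range of $T^\prime$ lies in the atomic part, choose a maximal disjoint family of positive norm-one atoms $F_\alpha$ in $X^\prime$ meeting $T^\prime(Y^\prime)$. Only countably many are needed, since $\overline{T^\prime(Y^\prime)}$ is separable. Write $T^\prime g=\sum_n \phi_n(g)F_n$, where $\phi_n(g)$ is the coefficient of the band projection of $T^\prime g$ onto $\mathrm{span}\,F_n$. Each $\phi_n$ is a positive functional on $Y^\prime$. Because $T^\prime$ is interval preserving, $\phi_n$ is a lattice homomorphism. It is also weak$^*$ continuous, because $\phi_n(g)=\langle T^\prime g, x_n\rangle/\langle F_n,x_n\rangle$ for a suitable $x_n\in X$ with $\langle F_n, x_n\rangle\neq 0$, together with disjointness. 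Hence $\phi_n(g)=g(y_n)$ for some $y_n\in Y_+$.

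Disjointness of the $y_n$ follows from $\phi_n$ being lattice homomorphisms with disjoint "supports": for $n\ne m$ we have $\phi_n\wedge\phi_m=0$ in $Y^{\prime\prime}$, so $y_n\wedge y_m=0$. This yields $T x=\sum_n F_n(x)y_n$ pointwise.

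Norm convergence comes from compactness. The tail operators are $T_N=\sum_{n>N}F_n\otimes y_n=(I-P_N)T$, where $P_N$ is the corresponding band-projection-type truncation. These converge to zero uniformly on the relatively compact set $T(\text{ball})$: they converge pointwise on each $Tx$ and are uniformly bounded by $\|T\|$. Hence $\|T_N\|\to 0$, which gives~(\ref{eq1}).
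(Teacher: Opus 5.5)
The paper gives no proof of Theorem~\ref{t1}; it is quoted from \cite{Pa} and \cite{Wi}, so your outline has to stand on its own. The architecture is sensible: dualize, show that $T'(Y')$ lies in the atomic part of $X'$, read off coefficient functionals, and get norm convergence from compactness. But the step you flag as the main obstacle cannot be done the way you propose. No refinement of the splitting will produce a disjoint sequence in $[0,f]$ with norms bounded below. Order continuity of the norm on $[0,f]$, the very tool you invoke, forces every disjoint sequence in $[0,f]$ to be norm null: its partial sums increase to a supremum in $[0,f]$ and are therefore norm Cauchy. Atomlessness does not change this, as $L^1[0,1]$ shows: there $[0,\mathbf 1]$ fails to be compact only because of non-disjoint sequences such as $\chi_{\{r_k=1\}}$. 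So the witness has to be a Rademacher-type sequence of components, and ``halves'' must be measured additively, not by the norm. Let $f_0\neq 0$ be the atomless part of $f$ and take $0\le\psi\in X''$ with $\psi(f_0)>0$. By order continuity $\psi$ is countably additive on the components of $f_0$, and after discarding its null part it is atomless. An independent sequence of $\psi$-halves $e_k$ then satisfies $\|e_k-e_l\|\ge\psi(|e_k-e_l|)/\|\psi\|=\psi(f_0)/(2\|\psi\|)$ for $k\neq l$, contradicting compactness. Alternatively, quote the standard fact that norm compact order intervals lie in the atomic part.

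Two later claims are also false. First, $\phi_n$ need not be a lattice homomorphism: for $T:\mathbb{R}\to\ell^\infty_2$, $t\mapsto(t,t)$, one gets $\phi_1(g)=g_1+g_2$ on $\ell^1_2$. So your justification of $y_n\wedge y_m=0$ rests on a false claim, although the conclusion is true. Second, your formula $\phi_n(g)=\langle T'g,x_n\rangle/\langle F_n,x_n\rangle$ needs an $x_n$ annihilated by every other $F_m$, and none may exist. For $X=c$, $Y=\mathbb{R}\oplus_\infty c_0$, $Tx=(\lim x,(2^{-m}x_m)_m)$, every $x$ killed by all $\delta_m$ is also killed by the atom $\lim$. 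Both points are repaired by writing $\phi_n=T''\Phi_n$, where $\Phi_n\in X''$ assigns to $h\in X'$ the coefficient of $P_nh$ along $F_n$, with $P_n$ the band projection onto $\mathrm{span}\,F_n$. Compactness gives $T''(X'')\subset Y$. Moreover $T''$ is a lattice homomorphism, being the adjoint of the interval preserving $T'$, so $y_n\ge 0$ and $y_n\wedge y_m=T''(\Phi_n\wedge\Phi_m)=0$. Finally, your truncations $I-P_N$ on $Y$ presuppose band projections in $Y$ and a norm convergent expansion of each $Tx$, and you have neither. Truncate in $X'$ instead: $\|T-\sum_{n\le N}F_n\otimes y_n\|=\|(I-Q_N)T'\|$, where $Q_N$ is the band projection onto $\mathrm{span}\{F_1,\dots,F_N\}$. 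This tends to $0$ because $I-Q_N\to 0$ boundedly and pointwise on $T'(Y')$, hence uniformly on the relatively compact image of the unit ball.
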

  
  H. Kamowitz obtained (\cite{Ka}) the following characterization of compact weighted composition operators on $C(K)$.

  \begin{theorem} \label{t2} (Kamowitz) Let $K$ be a compact Hausdorff space and $C(K)$ be the space of all continuous complex-valued functions on $K$. Let $w \in C(K)$ and let $\varphi$ be a continuous map of $K$ into itself.
  
  The operator $T, Tf = w(f\circ \varphi), f \in C(K)$, is compact if and only if the function $w$ is constant on every component of the set $\{k \in K: |w(k)| >0\}$.    
  \end{theorem}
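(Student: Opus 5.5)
Before planning a proof, I would test the statement exactly as written against the two simplest classes of weighted composition operators. These are the identity-type operators ($\varphi=\mathrm{id}$) and the rank-one operators ($\varphi$ constant). Both tests go against the literal wording, so my plan is to record these checks first. The conclusion of the checks is that the theorem cannot be proved verbatim.

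\emph{The ``if'' direction as worded.} Take $K=[0,1]$, $w\equiv 1$ and $\varphi=\mathrm{id}_K$. Then $\{k: |w(k)|>0\}=K$ is connected and $w$ is constant on it, so the stated condition holds. However, $T$ is the identity operator on the infinite-dimensional space $C[0,1]$. By Riesz's lemma its closed unit ball is not compact, so $T$ is not compact.

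\emph{The ``only if'' direction as worded.} Take $K=[0,1]$, $w(k)=1+k$ and $\varphi\equiv p$ for a fixed $p\in K$. Then $Tf=f(p)\,w$, which has rank one and is therefore compact. But $\{|w|>0\}=K$ is connected and $w$ is not constant on it. So compactness does not force the stated condition either.

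Consequently I would not try to prove the theorem in this form, since both implications fail. The examples also indicate what the condition should control. What decides compactness is how $\varphi$ behaves on the set where $w\neq 0$, not the values of $w$ there. The identity example fails because $\varphi$ is non-constant on a connected open set where $w\neq0$. The rank-one example succeeds because $\varphi$ is constant there.

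So my plan at this point is to flag the statement as misprinted and check it against Kamowitz's original formulation in \cite{Ka}. I would not prove a substitute result under this heading. The main obstacle is not a technical step but the wording itself: any argument with a conclusion about $w$ alone is refuted by the two operators above.
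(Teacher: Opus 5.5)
Your two examples are correct, and they show that the statement as printed fails in both directions. With $K=[0,1]$, $w\equiv 1$, $\varphi=\mathrm{id}$, the condition holds but $T=I$ is not compact. With $w(k)=1+k$ and $\varphi\equiv p$, the operator $Tf=f(p)\,w$ has rank one, yet $w$ is not constant on the connected set $\{|w|>0\}=K$.

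There is no proof in the paper to compare with; it only quotes Kamowitz. The misprint is that the condition should concern $\varphi$ rather than $w$, so you are right not to manufacture a proof.

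One caution if you go on to repair it: even ``$\varphi$ is constant on every component of $\{w\neq 0\}$'' guarantees compactness when that open set is locally connected, but not in general. Take $K$ the Cantor set, $w\equiv 1$, $\varphi=\mathrm{id}$. The components are points, so $\varphi$ is trivially constant on each, yet $T=I$ is not compact.

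The robust form is that $T$ is compact iff $\varphi$ is locally constant on $\{k: w(k)\neq 0\}$. This follows from the standard criterion that $T:C(K)\to C(K)$ is compact iff $k\mapsto T'\delta_k=w(k)\delta_{\varphi(k)}$ is norm continuous. The key estimate is $\|w(k)\delta_{\varphi(k)}-w(l)\delta_{\varphi(l)}\|=|w(k)|+|w(l)|$ whenever $\varphi(k)\neq\varphi(l)$, while the distance is $|w(k)-w(l)|$ when $\varphi(k)=\varphi(l)$. So continuity at a point with $w(k)\neq 0$ forces $\varphi$ to be constant near $k$, and continuity at zeros of $w$ is automatic.
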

  
  The above-mentioned results were extended to arbitrary compact disjointness preserving operators between Banach lattices by E.A. Arenson and the first-named author in ~\cite{AK}.

  \begin{theorem} \label{t3} (Arenson - Kitover)
    Let $X,Y$ be Banach lattices and $T: X \rightarrow Y$ be a compact disjointness preserving operator. Then there are pairwise disjoint elements $y_1, y_2, \ldots \in Y$ and pairwise disjoint atoms $F_1, F_2, \ldots \in X^\prime$ such that
\begin{equation}\label{eq2}
  T = \sum \limits_{n=1}^\infty F_n \otimes y_n,
\end{equation}
  where the series in~(\ref{eq2}) converges in the operator norm.
  \end{theorem}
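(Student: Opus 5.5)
Theorem~\ref{t3} can be proved by reducing it to the lattice homomorphism case, Theorem~\ref{t1}, through the modulus of $T$. A disjointness preserving operator between Banach lattices is order bounded. By the Meyer/Arendt theorem its modulus $|T|$ exists and is a lattice homomorphism satisfying $|Tx| = |T|(|x|)$ for all $x\in X$. We will also use the polar decomposition of $T$: on the band in $Y$ generated by $T(X)$ there is an order continuous, $\{0,\pm 1\}$- (or unimodular, in the complex case) valued "sign" multiplier $S$ with $T = S|T|$. Over the Dedekind completion $Y^\delta$, this $S$ is an orthomorphism.

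The first key step is to show that $|T|$ is compact whenever $T$ is. This is the step I expect to be the main obstacle. Take a sequence $x_n$ in the unit ball of $X$. Compactness of $T$ makes $\{Tx_n\}$ relatively compact. The difficulty is that $|T|x_n$ equals $|T x_n^+| + |T x_n^-|$ only in a subtle sense, since $T x_n^+ \perp T x_n^-$. I would first reduce to positive $x_n$, using $x = x^+ - x^-$ and $|T|x^{\pm} = |Tx^{\pm}|$. For $x\ge 0$ we then have $|T|x = |Tx|$. Since the map $y\mapsto |y|$ is norm continuous, the image of a relatively compact set under it is relatively compact. Hence $|T|$ maps the positive part of the unit ball onto a relatively compact set, and so $|T|$ is compact. (In the complex case one uses the real and imaginary parts together with the same continuity argument.)

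Next apply Theorem~\ref{t1} to the compact lattice homomorphism $|T|$. This gives
\[
|T| = \sum_n F_n\otimes z_n,
\]
with pairwise disjoint positive atoms $F_n \in X'$ and pairwise disjoint positive $z_n\in Y$, and the series converges in norm. Then set $y_n := S z_n$. To make sense of this inside $Y$ rather than $Y^\delta$, observe that $T F$-components can be isolated directly. Since $F_n$ is an atom, $F_n \otimes z_n$ is the restriction of $|T|$ to the component carried by $F_n$. Choose $x$ with $F_n(x)=1$ supported so that $F_m(x)=0$ for $m\ne n$, which is possible up to small error using disjointness of the atoms. Then define $y_n := T x$, and check that $|y_n| = z_n$ and $T = \sum_n F_n\otimes y_n$.

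Disjointness of the $y_n$ follows from $|y_n| = z_n$. Norm convergence follows because
\[
\Bigl\|T-\sum_{n\le N}F_n\otimes y_n\Bigr\| \le \Bigl\||T|-\sum_{n\le N}F_n\otimes z_n\Bigr\|,
\]
which holds since $|T x|\le |T||x|$ applied to the tail operator, with disjoint pieces combining in the lattice norm.
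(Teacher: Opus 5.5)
You rightly reduce to $|T|$ using order boundedness and the identity $|Tx|=|T|\,|x|$. The paper does not reprove Theorem~\ref{t3}, but Remark~\ref{r1} indicates the same inputs, plus ``some additional work.'' Your proof that $|T|$ is compact is also correct. It is in fact the easy step: $|T|(B_X^+)$ lies in the image of the relatively compact set $T(B_X)$ under the continuous map $y\mapsto |y|$.

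\textbf{The gap.} It is in the last step, which is exactly where the extra work beyond Theorem~\ref{t1} lies.
\begin{itemize}
\item An $x$ with $F_n(x)=1$ and $F_m(x)=0$ for all $m\neq n$ need not exist. Take $X=C(K)$ with $K=\{0\}\cup\{1/m: m\ge 2\}$, $F_1=\delta_0$ and $F_m=2^{-m}\delta_{1/m}$ for $m\ge 2$. These are the atoms of the compact lattice homomorphism $x\mapsto (F_m(x))_m$ into $c_0$, yet no continuous $x$ has $x(0)=1$ and $x(1/m)=0$ for all $m$.
\item With only approximate choices, $Tx$ depends on $x$. So ``$y_n:=Tx$'' defines nothing until you show these values converge to a limit independent of the choice, and you give no estimate that does this.
\item ``$|y_n|=z_n$ and $T=\sum_n F_n\otimes y_n$'' is asserted, not checked.
\item Your final inequality applies $|Rx|\le |R|\,|x|$ to $R=T-\sum_{n\le N}F_n\otimes y_n$. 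That presumes $R$ is disjointness preserving with $|R|=|T|-\sum_{n\le N}F_n\otimes z_n$, which nothing yet gives you; the trivial bound on $|R|$ goes the wrong way.
\item The same issue sinks $y_n:=Sz_n$. That element lives in $Y^\delta$, and an element whose modulus lies in $Y$ need not itself lie in $Y$.
\end{itemize}

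\textbf{The missing idea.} A positive atom of $X'$ is a real lattice homomorphism, so $F_m(|x|)=|F_m(x)|$. Hence $|Tx|=|T|\,|x|=\sum_m |F_m(x)|\,z_m$ for every $x$.

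Let $P_n$ be the band projection onto $\{z_n\}^{\perp\perp}$ in the Dedekind complete lattice $Y''$, and let $Q_N=P_1+\dots+P_N$.
\begin{itemize}
\item $|P_nTx|=P_n|Tx|=|F_n(x)|\,z_n$, so $P_nT$ vanishes on the codimension-one subspace $\ker F_n$. Hence $P_nT=F_n\otimes y_n$ with $y_n=P_nTu$ for any $u$ with $F_n(u)=1$, and $|y_n|=z_n$.
\item $|Tx-\sum_{n\le N}F_n(x)y_n|=(I-Q_N)|Tx|=\sum_{m>N}|F_m(x)|\,z_m$. This is precisely the tail estimate you wanted, and it gives norm convergence.
\item To get $y_n\in Y$, use disjointness of the lattice homomorphisms $F_m$ (Riesz--Kantorovich, then finite infima). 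This yields $u_k\ge 0$ with $\sup_k\|u_k\|<\infty$, $F_n(u_k)=1$, and $F_m(u_k)=0$ for $m\le k$, $m\neq n$.
\item For $k\ge n$ you then have $Tu_k-y_n=(I-Q_k)Tu_k$. Therefore $\|Tu_k-y_n\|\le \sup_j\|u_j\|\,\bigl\||T|-\sum_{m\le k}F_m\otimes z_m\bigr\|\to 0$, and $y_n\in Y$ because $Tu_k\in Y$ and $Y$ is closed in $Y''$.
\end{itemize}
This is the rigorous form of your ``up to small error'' choice; the uniform bound on $\|u_k\|$ is what makes it work.
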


  \begin{remark} \label{r1}
Although Theorems~\ref{t1} and~\ref{t3} are formally similar in statement, extending Theorem 1.2 to all compact disjointness preserving operators requires some additional work (see~\cite[Lemma 3]{AK}), as well as application of nontrivial results from~\cite{Ab} and~\cite{AVK}.
  \end{remark}
  
  The goal of the current paper is to obtain an analog of Theorem~\ref{t3} for compact disjointness preserving operators acting between two Banach $C(K)$-modules.
  
  For the reader's convenience we briefly recall some basic facts about Banach $C(K)$-modules (see~\cite{AAK})
  
  A Banach space $X$ is called a Banach $C(K)$-module if there is an isometry $m: C(K) \rightarrow L(X)$, where $L(X)$ is the space of all bounded linear operators on $X$.
  
  The center $Z(X)$ of a Banach $C(K)$-module $X$ is defined (see~\cite[p.33]{AAK}) as the closure of $C(K)$ in $L(X)$ in the strong operator topology. In the current paper we will always assume that $C(K) = Z(X)$.
  
    For any $x \in X$ the subspace $X(x)$ of $X$, $X(x) = cl_X\{fx : f \in C(K)\}$ can be endowed with the structure of a Banach lattice with the cone $X(x)_+ = cl_X\{fx: f \in C(K), f \geq 0\}$. The space $X(x)$ is called a cyclic subspace of $X$.
    
    By analogy with definition~\ref{d1} we introduce the following definition
  
    \begin{definition} \label{d2}
    Let $X$ be a Banach $C(K)$-module. An element $x \in X$ is called an atom if the cyclic subspace $X(x)$ is one-dimensional.
      \end{definition}
  
  A Banach $C(K)$-module $X$ is called  finitely-generated if there are
  $x_1, \ldots, x_n \in X$ such that 
  
  \begin{equation*}
    cl\{f_1x_1 + f_2x_2 + \ldots + f_nx_n, f_1, \ldots, f_n \in C(K)\} = X.
  \end{equation*}

  A Banach $C(K)$-module $X$ is called a Kaplansky module if the compact space $K$ is extremally disconnected and for any $x \in X$ the set $\{f \in C(K): fx=0\}$ is a band in $C(K)$. 
\footnote{In~\cite{AAK} Kaplansky modules are called order complete modules.}

Two elements, $x_1,x_2$ of a Banach $C(K)$-module $X$ are called disjoint ($x_1 d x_2$) if $x_1,x_2 \in X(x_1+x_2)$ and $x_1,x_2$ are disjoint elements of the Banach lattice $X(x_1+x_2)$.

Let $X$ be a Banach $C(K_1)$-module and $Y$ be a Banach $C(K_2)$-module. A linear operator $T: X \rightarrow Y$ is called
disjointness preserving if $x_1 d x_2 \Rightarrow Tx_1 d Tx_2$.   We will need the following theorem proved in~\cite{KO}
  
 \begin{theorem} \label{t4}
    Let $X$ be a Banach $C(K_1)$-module and $Y$ be a Banach $C(K_2)$-module. Let $T: X \rightarrow Y$ be a linear operator.
    The following conditions are equivalent.
    
    (1) $T$ is a disjointness preserving operator.
    
    (2) For any $x \in X$ we have $T(X(x)) \subset Y(Tx)$ and the restriction of $T$ on $X(x)$ is a disjointness preserving operator between the Banach lattices $X(x)$ and $Y(Tx)$.
  \end{theorem}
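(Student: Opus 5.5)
The implication $(2)\Rightarrow(1)$ is essentially unwinding definitions, so I would do it first. Let $x_1 d x_2$ and put $x=x_1+x_2$. By the definition of disjointness, $x_1,x_2\in X(x)$ and they are disjoint in the Banach lattice $X(x)$. By (2), $Tx_1,Tx_2\in T(X(x))\subset Y(Tx)=Y(Tx_1+Tx_2)$. Since $T|_{X(x)}$ is disjointness preserving between the Banach lattices $X(x)$ and $Y(Tx)$, the elements $Tx_1,Tx_2$ are disjoint in $Y(Tx_1+Tx_2)$. This is exactly $Tx_1\, d\, Tx_2$.

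For $(1)\Rightarrow(2)$ I would first record a preliminary lemma. Let $u,v$ belong to a cyclic subspace $X(x)$ and be disjoint there in the lattice sense. Then $X(u+v)$ is the closed $C(K_1)$-submodule generated by $u+v$, which is the closed ideal of $X(x)$ generated by $|u|+|v|$. Hence $u,v\in X(u+v)$, and $u,v$ are disjoint there. In other words, lattice disjointness inside a cyclic space coincides with module disjointness. Once $T(X(x))\subset Y(Tx)$ is known, the same lemma applied in $Y(Tx)$ shows that $T|_{X(x)}$ preserves lattice disjointness. So the whole content of $(1)\Rightarrow(2)$ is the inclusion $T(fx)\in Y(Tx)$ for $f\in C(K_1)$. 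By linearity and continuity of $T$ it is enough to take $f$ real with $0\le f\le 1$.

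To prove the inclusion I would use a disjoint-partition approximation. Fix $N$ and choose continuous functions $\varphi_1,\dots,\varphi_m$ on $[0,1]$ with pairwise disjoint supports, each supported in an interval of length $<1/N$, such that the "gap" function $1-\sum_i\varphi_i$ is supported in a small neighbourhood of finitely many points. Set $u_i=\varphi_i(f)x$. These elements are pairwise disjoint in $X(x)$, hence pairwise module-disjoint by the lemma. By (1) the $Tu_i$ are then pairwise disjoint in $Y$. Iterating the definition of disjointness, every $Tu_i$ lies in $Y(w)$, where $w=\sum_iTu_i$. Picking $t_i\in\operatorname{supp}\varphi_i$, we get $\bigl\|T(fx)-\sum_i t_iTu_i\bigr\|\le \|T\|\,\bigl\|(f-\sum_i t_i\varphi_i(f))x\bigr\|$. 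The right-hand side is small except for the contribution of the gap region. The aim is to show that $\sum_i t_iTu_i=g\,w$ approximately, for some $g\in C(K_2)$, and that $w$ is close to $Tx$.

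The main obstacle is controlling the gap term $(1-\sum_i\varphi_i(f))x$, which need not be small in norm. My first attempt would be to use two interlaced families $\{\varphi_i\}$ and $\{\psi_j\}$ whose gaps are disjoint. Then $x$ splits as $x=a+b$ with $a\in X$ built from the $\varphi$-pieces and $b$ built from the $\psi$-pieces, up to pieces that are again pairwise disjoint, so that $Tx$ itself is a disjoint sum. From this I would show $T(\varphi_i(f)x)\in Y(Tx)$ for each $i$, writing such an element as the image of a disjoint summand of $x$ and using the fact that a disjoint summand of $y$ lies in $Y(y)$. Closedness of $Y(Tx)$ and the estimate above, letting $N\to\infty$, then give $T(fx)\in Y(Tx)$. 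If the interlacing bookkeeping becomes awkward, the fallback is the lattice identity $T(y-\lambda x)=T(y-\lambda x)^+-T(y-\lambda x)^-$, applied for rational $\lambda$ with $y=fx$.
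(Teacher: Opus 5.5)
The paper gives no proof of this statement; it is quoted from \cite{KO}. So I am judging your argument on its own, and it has a genuine gap in the main step.

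Your $(2)\Rightarrow(1)$ is correct. The preliminary lemma is true, but be careful about which structure transfers. The cone of $X(u+v)$ contains $u+v$, which need not be positive in $X(x)$, so $X(u+v)$ does not carry the order inherited from $X(x)$. What transfers is disjointness, and that needs a line of proof. In $Y$ you need the converse direction: disjointness in $Y(Tu+Tv)$ implies disjointness in $Y(Tx)$.

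The gap is the inclusion $T(X(x))\subset Y(Tx)$, which is the whole content of $(1)\Rightarrow(2)$. Neither of your plans establishes it.

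\emph{The interlacing plan.} It needs $\varphi_i(f)x$ to be a disjoint summand of $x$, and such summands usually do not exist. Take $X=C[0,1]$ as a module over itself and $x=\mathbf 1$. The only components of $x$ are $0$ and $x$. Two interlaced families give $x=a+b$ with $a,b$ overlapping, so $Tx$ is not a disjoint sum. What your construction actually yields is $\sum_i t_iTu_i\in Y(w)$ with $w=T(\sum_i u_i)$. But $Tx-w=T\bigl((1-\sum_i\varphi_i(f))x\bigr)$ is not small, and nothing relates $Y(w)$ to $Y(Tx)$.

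\emph{The fallback.} It has the same defect. The relation $(f-\lambda)^+x\perp(f-\lambda)^-x$ only places $T((f-\lambda)^{\pm}x)$ in $Y(T(fx)-\lambda Tx)$. The term $\lambda Tx$, together with the absence of band projections in $Y$, blocks any return to $Y(Tx)$.

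\emph{The missing idea is to dualize}, which removes both obstacles.
\begin{enumerate}
\item Suppose $T(hx)\notin Y(Tx)$. Choose $F\in Y'$ vanishing on $Y(Tx)$ with $F(T(hx))\neq 0$.
\item Define the bounded operator $R:C(K_1)\to M(K_2)=C(K_2)'$ by $\langle Rk,g\rangle=F(g\,T(kx))$. Then $R\mathbf 1=0$.
\item If $k_1k_2=0$, then $k_1x\,d\,k_2x$, so $T(k_1x)\,d\,T(k_2x)$. These two elements are components of their sum in $Y(T(k_1x)+T(k_2x))$. Hence there are $g_n\in C(K_2)$ with $0\le g_n\le 1$, $g_nT(k_1x)\to T(k_1x)$ and $g_nT(k_2x)\to 0$ in norm. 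This forces $Rk_1\perp Rk_2$ in $M(K_2)$.
\item So $R$ is a bounded disjointness preserving operator into an AL-space with $R\mathbf 1=0$. You can finish by citing automatic order boundedness and Meyer's formula, which give $|Rk|\le\|k\|\,|R\mathbf 1|=0$.
\item Alternatively, your own level-splitting now works. For real $f$, $Rf=R((f-\lambda)^+)-R((f-\lambda)^-)$ is a disjoint decomposition. Hence $R((f-\lambda)^+)=Rf|_{A_\lambda}$ for Borel sets $A_\lambda$. These sets decrease in $\lambda$ up to $|Rf|$-null sets, because $(f-\mu)^+\perp(f-\lambda)^-$ for $\lambda<\mu$.
\item Take a sawtooth function $s$ of $f$ with mesh $\delta$ on an interval strictly containing the range of $f$. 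Then $\|s\|_\infty\le\delta/2$, while $Rs$ is $Rf$ with alternating signs on a partition of $K_2$, so $\|Rs\|=|Rf|(K_2)$. Letting $\delta\to 0$ gives $Rf=0$.
\end{enumerate}
Hence $F(T(hx))=\langle Rh,\mathbf 1\rangle=0$, a contradiction, and Hahn--Banach gives the inclusion.
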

  
  And we need as well the following (see~\cite[Proposition 11.23]{AAK})

  \begin{theorem} \label{t5}
    Let $X$ be a Banach $C(K_1)$-module and $Y$ be a Kaplansky $C(K_2)$-module. Let $T: X \rightarrow Y$ be a disjointness preserving operator. Let $E = supp(TX) \subset K_2$. Then there is a continuous map $\varphi: E \rightarrow K_1$, such that
    $Tf =(f\circ \varphi)T, f \in C(K)$.
  \end{theorem}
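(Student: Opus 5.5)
The plan is to build $\varphi$ pointwise from the way $T$ transports supports, and then to check the intertwining identity locally. Throughout, supports in $K_2$ are taken with respect to the $C(K_2)$-module structure of $Y$: for $y\in Y$, $\operatorname{supp}(y)$ is the complement of the largest open set $U$ such that $hy=0$ for all $h\in C(K_2)$ with $\operatorname{supp}h\subset U$. Since $Y$ is a Kaplansky module, $\{h: hy=0\}$ is a band in $C(K_2)$ and $K_2$ is extremally disconnected. Hence each $\operatorname{supp}(y)$ is clopen, and $hy=0$ exactly when $h$ vanishes on $\operatorname{supp}(y)$.

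\emph{Support separation.} The first step is the basic consequence of Theorem~\ref{t4}. Fix $x\in X$, and let $f,g\in C(K_1)$ have disjoint supports. Then $fx$ and $gx$ are disjoint in the Banach lattice $X(x)$, so $fx\,d\,gx$. By Theorem~\ref{t4}, $T(fx)$ and $T(gx)$ are disjoint in $Y(Tx)$. The Kaplansky property should convert disjointness into disjointness of the clopen supports $\operatorname{supp}T(fx)$ and $\operatorname{supp}T(gx)$. I expect this to need a short lemma identifying disjointness in a cyclic subspace of a Kaplansky module with disjointness of supports. Extending to finite sums $\sum f_ix_i$ and to closures, I obtain: if $\operatorname{supp}f\cap\operatorname{supp}g=\emptyset$, then $\operatorname{supp}T(fX)\cap \operatorname{supp}T(gX)=\emptyset$.

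\emph{Construction of $\varphi$.} Fix $k\in E=\operatorname{supp}(TX)$ and consider the family $\mathcal F_k$ of closed sets $\operatorname{supp}f\subset K_1$ with $k\in\operatorname{supp}T(fX)$. A partition of unity shows $\mathcal F_k$ is nonempty, and more precisely that every finite open cover of $K_1$ has a member containing some element of $\mathcal F_k$. Support separation shows that no two elements of $\mathcal F_k$ can be separated by functions with disjoint supports. Together these give a unique point $\varphi(k)$ such that every neighbourhood of $\varphi(k)$ contains a set from $\mathcal F_k$. Equivalently, if $f$ vanishes in a neighbourhood of $\varphi(k)$, then $T(fX)$ vanishes near $k$.

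\emph{Continuity.} Continuity of $\varphi$ is then immediate. If $V$ is an open neighbourhood of $\varphi(k)$, pick $f$ with $f=1$ near $\varphi(k)$ and $\operatorname{supp}f\subset V$. For $k'$ in the open set $\operatorname{supp}T(fX)\setminus\operatorname{supp}T((1-f)X)$, which contains $k$, the same argument forces $\varphi(k')\in V$.

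\emph{The identity $T(fx)=(f\circ\varphi)Tx$.} Fix $f$ and $k\in E$, and put $c=f(\varphi(k))$. Approximate $f-c$ uniformly by functions vanishing near $\varphi(k)$. By the construction of $\varphi$, the images of these approximants vanish near $k$, so $T((f-c)x)$ is annihilated by every $h$ supported close to $k$. Since $f\circ\varphi$ is continuous, $T(fx)-(f\circ\varphi)Tx$ is annihilated by every $h$ supported in a small neighbourhood of $k$. Covering $E$ by finitely many such neighbourhoods and using a partition of unity gives $T(fx)=(f\circ\varphi)Tx$, since both sides are supported in $E$.

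\emph{Main obstacle.} I expect the hardest step to be the approximation in the last paragraph. It needs uniform control of $\|T((f-f_n)x)\|$, which requires boundedness of $T$ and the identification of $Z(X)$ with $C(K_1)$. It also needs the support-separation lemma for arbitrary Kaplansky modules rather than lattices. If the direct argument fails there, the fallback is to work inside each cyclic pair $X(x)\to Y(Tx)$, where the Banach lattice theory of disjointness preserving operators gives a local map $\varphi_x$ on the clopen set $\operatorname{supp}(Tx)$. One then shows that the maps $\varphi_x$ agree on overlaps using support separation, and glues them to a continuous $\varphi$ on $E$ using the extremal disconnectedness of $K_2$.
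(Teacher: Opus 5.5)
The paper has no proof of its own here (it only refers to [AAK, Proposition 11.23]), so your argument has to stand on its own. Your first four steps do. The construction of $\varphi(k)$ from $\mathcal F_k$ is correct, and so is the locality property ``$f=0$ near $\varphi(k)$ $\Rightarrow$ $T(fX)$ vanishes near $k$''. Its consequence $\operatorname{supp}T(gX)\subset\varphi^{-1}(\operatorname{supp}g)$ and the continuity of $\varphi$ also follow. All of this is granted the support-separation lemma, which you only announce; it is true, but its proof uses the band property of annihilators in an essential way.

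There is a genuine gap in the last step, and it is not the boundedness issue you flag. You take $g_\epsilon\to f-c$ uniformly, with $T(g_\epsilon x)$ vanishing on a clopen neighbourhood $N_\epsilon$ of $k$, and conclude that $T((f-c)x)$ vanishes on a neighbourhood of $k$. The $N_\epsilon$ shrink, and vanishing near $k$ is not stable under norm limits. In fact the conclusion is false. Take $X=Y=\ell^\infty$ over $C(\beta\mathbb N)$, $T=I$, $x=1$, $f=(1/n)_n$ and $k\in\beta\mathbb N\setminus\mathbb N$. Then $c=0$ and $\operatorname{supp}T((f-c)x)=\beta\mathbb N$. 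What the approximation actually gives is $\|\chi_{N_\epsilon}(T(fx)-(f\circ\varphi)Tx)\|\le C\epsilon$ on neighbourhoods that depend on $\epsilon$. Such estimates cannot be glued over a finite clopen partition of $E$. The number of pieces grows as $\epsilon\to0$, and in a Banach $C(K)$-module the norm of a sum of disjointly supported elements is not controlled by the largest of their norms (think of $\ell^1$).

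Your locality is nevertheless enough if you approximate globally with bounded overlap.
\begin{itemize}
\item Fix $\epsilon>0$. Take a finite partition of unity $\{\psi_\alpha\}$ near $f(K_1)\subset\mathbb C$ by functions supported in squares of side $2\epsilon$ with centres $c_\alpha$. Arrange that it splits into a fixed number of subfamilies (nine suffices), each with pairwise disjoint supports. Put $u_\alpha=\psi_\alpha\circ f$.
\item Then $T(fx)-(f\circ\varphi)Tx=\sum_\alpha T((f-c_\alpha)u_\alpha x)-\sum_\alpha(f\circ\varphi-c_\alpha)T(u_\alpha x)$.
\item The first sum is $T(hx)$ with $\|h\|_\infty\le\sqrt2\,\epsilon$.
\item Within one subfamily, the clopen sets $S_\alpha=\operatorname{supp}T(u_\alpha x)$ are pairwise disjoint by support separation, and $|f\circ\varphi-c_\alpha|\le\sqrt2\,\epsilon$ on $S_\alpha$ by locality. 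So that part of the second sum equals $g\,T\bigl(\sum_\alpha u_\alpha x\bigr)$, where $g=\sum_\alpha\chi_{S_\alpha}(f\circ\varphi-c_\alpha)\in C(K_2)$, $\|g\|_\infty\le\sqrt2\,\epsilon$, and the sums run over that subfamily.
\end{itemize}
Hence $\|T(fx)-(f\circ\varphi)Tx\|\le C\epsilon\|T\|\|x\|$ with $C$ independent of $\epsilon$, which gives the identity. Alternatively, apply the Abramovich--Arendt theorem to $T|_{X(x)}$. It gives $|T(hx)|\le\|h\|_\infty|Tx|$ in $Y(Tx)$, which lets you add your local estimates in the order of $Y(Tx)$ rather than in norm.
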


  \section{The main result}  
     
    \begin{theorem} \label{t6}
  Let $X$ be a finitely generated Banach $C(K_1)$-module with generators $x_1, \ldots, x_p$ and $Y$ be a Banach $C(K_2)$-module. Let $T : X \rightarrow Y$ be a linear disjointness preserving operator. Then, the following conditions are equivalent
  \begin{enumerate}
    \item The operators $T : X(x_j) \rightarrow Y(Tx_j), j = 1, \ldots p $ are compact.
    \item For any $x \in X$ the operator $T : X(x) \rightarrow Y(Tx)$ is compact.
    \item The operator $T$ is compact.
    \item 
        \begin{equation}\label{eq6}
      T = \sum \limits_{n=1}^\infty F_n \otimes \sum \limits_{j=1}^{k(n)} y_{j,n},
    \end{equation}
    where $F_n$ are pairwise disjoint atoms in $X^\prime$,  $y_{j,n} \in Y$ for $n \in \mathds{N}, 1\leq j \leq k(n)$, and $k(n) \leq p$ for any $n \in \mathds{N}$,  Moreover, the elements        
       $y_{i,m}$ and  $y_{j,n}$, where $m \neq n, 1 \leq i \leq k(m), 1 \leq j \leq k(n)$, are disjoint,     
    and the series in~(\ref{eq6}) converges in the operator norm.
  \end{enumerate}
    \end{theorem}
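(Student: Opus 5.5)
I will prove the cycle of implications $(3)\Rightarrow(2)\Rightarrow(1)\Rightarrow(3)$ and then $(3)\Leftrightarrow(4)$.

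\emph{The easy implications.} The implication $(3)\Rightarrow(2)$ holds because, by Theorem~\ref{t4}, $T$ maps $X(x)$ into $Y(Tx)$. The restriction of a compact operator to a closed subspace is compact, and compactness does not depend on which closed subspace of $Y$ contains the range; the lattice norm on $Y(Tx)$ is the norm inherited from $Y$. The implication $(2)\Rightarrow(1)$ is trivial.

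\emph{From $(1)$ to $(3)$.} Let $X_0=\{f_1x_1+\dots+f_px_p : f_j\in C(K_1)\}$, which is dense in $X$ by assumption. On $X_0$ we have $T(f_1x_1+\dots+f_px_p)=\sum_j T(f_jx_j)$, with $T|_{X(x_j)}$ compact for each $j$. The difficulty is that the norm in $X$ of $\sum f_jx_j$ need not dominate $\max_j\|f_jx_j\|$, so $T$ on $X$ is not obviously the sum of the compact operators $T_j=T|_{X(x_j)}$. I would handle this in two steps.

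\begin{enumerate}
\item Apply Theorem~\ref{t3} to each $T_j$, viewed as a compact disjointness preserving operator between the Banach lattices $X(x_j)$ and $Y(Tx_j)$. This gives $T_j=\sum_n G_{j,n}\otimes z_{j,n}$ with atoms $G_{j,n}\in X(x_j)'$.
\item An atom of $X(x_j)'$ is a multiple of a point evaluation at some $k\in K_1$, in the sense that $G(fx_j)=f(k)G(x_j)$. Equivalently, by Theorem~\ref{t5}-type reasoning, $T(fx_j)=(f\circ\varphi)\,Tx_j$ on the support. So the points $k_{j,n}$ can be collected into a single countable set $\{k_n\}\subset K_1$.
\end{enumerate}

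One then shows that $Tx$, for $x\in X_0$, depends only on the ``values'' of $x$ at the points $k_n$. For each $k_n$ the space $X/\overline{J_{k_n}X}$, where $J_{k_n}=\{f: f(k_n)=0\}$, has dimension at most $p$. Compactness of $T$ then follows by approximating $T$ with finite sums over $n\le N$: each such sum is finite rank, of rank at most $pN$. The tails are controlled because the $z_{j,n}$ for different $n$ are disjoint and $\|T_j-\sum_{n\le N}\|\to0$. In controlling $\|\sum_{n>N}\cdot\|$ on all of $X$, I would use disjointness in $Y$: elements $y_{j,n}$ attached to different points are disjoint, so the norm of a sum over $n>N$ is governed by a bound of the form $p\cdot\max_j\|\text{tail}_j\|$, after restricting via band-type projections given by the center.

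\emph{$(3)\Leftrightarrow(4)$.} The implication $(4)\Rightarrow(3)$ is immediate, since a norm limit of finite rank operators is compact. For $(3)\Rightarrow(4)$, I would group the terms above by the point $k_n$. The map $x\mapsto Tx$ restricted to the ``fiber'' at $k_n$ is given by at most $p$ functionals, which I take to be the evaluations of the coordinates of $x$ along the generators. These are combined as $F_n\otimes\sum_{j\le k(n)}y_{j,n}$, where $F_n$ is the atom of $X'$ supported at $k_n$. Disjointness of the $F_n$ follows because the points $k_n$ are distinct, and disjointness of the $y_{j,n}$ across different $n$ comes from Theorem~\ref{t3} via the disjoint supports $\varphi^{-1}(k_n)$.

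\emph{Main obstacle.} The hard part is the norm estimate showing that $T$, defined on the dense set $X_0$, is bounded by the compact pieces, together with the precise form of (4). I would fix the form of (4) by reading it as: each $F_n$ is a functional vanishing on $J_{k_n}X$, and $k(n)\le p$ comes from $\dim X/\overline{J_{k_n}X}\le p$. I would first try to establish the key norm estimate using the module structure: multiplying by a Urysohn function $h$ with $h=1$ near $k_1,\dots,k_N$ and small elsewhere splits $T$ into a finite-rank part plus a part of small norm, which again uses $Tf=(f\circ\varphi)T$.
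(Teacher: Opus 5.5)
You correctly identify the real difficulty: $\|\sum_j f_jx_j\|$ need not control $\max_j\|f_jx_j\|$. Neither of your proposed fixes works, though, and no fix can, because $(1)\Rightarrow(3)$ is false.

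\textbf{Why the tail estimate fails.} Let $Q_N$ be the band projection killing the fibres over $k_1,\dots,k_N$. Your tail bound only gives
$\|Q_NTx\|\le\sum_j\|Q_NT|_{X(x_j)}\|\,\|f_jx_j\|$, and the right side is not $O(\|x\|)$. The Urysohn splitting needs $\|T((1-h)x)\|$ small uniformly on the unit ball of $X$, which is the claim itself. The paper's sentence ``the convergence \dots\ follows from the convergence of series~(\ref{eq7})'' has the same hole.

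\textbf{Counterexample.} Fix $0<\epsilon_n\le1$ with $\epsilon_n\to0$. Let $\ell^\infty=C(\beta\mathds{N})$ act coordinatewise on pairs of sequences normed by $\|(a,b)\|=\sup_n\max(|a_n-b_n|,\epsilon_n|a_n|)$. Let $X$ be the closure of $\{(f,g):f,g\in\ell^\infty\}$, generated by $x_1=(\mathbf{1},0)$ and $x_2=(0,\mathbf{1})$. Let $Y=\ell^\infty$ and $T(a,b)=(\epsilon_na_n)_n$.
\begin{itemize}
\item Since $T(hx)=h\,Tx$, $T$ maps each $X(x)$ into $Y(Tx)$ as a lattice homomorphism, so it preserves disjointness by Theorem~\ref{t4}.
\item $X(x_1)\cong\ell^\infty$ isometrically, with $T|_{X(x_1)}=\mathrm{diag}(\epsilon_n)$ compact, and $T|_{X(x_2)}=0$. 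So (1) holds.
\item But $u_n=\epsilon_n^{-1}\chi_{\{n\}}(x_1+x_2)$ has norm $1$ and $Tu_n=e_n$, so $T$ is not compact. In fact $T$ is an isometry on $X(x_1+x_2)$, so even (2) fails.
\end{itemize}
The functional $F_n(a,b)=a_n$ that the construction produces has norm $1$ on $X(x_1)$ but norm $\epsilon_n^{-1}$ on $X$.

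\textbf{A second gap, in $(3)\Rightarrow(4)$.} A term $F_n\otimes\sum_{j\le k(n)}y_{j,n}$ has rank one, while the piece of $T$ over $k_n$ can have rank $k(n)>1$. Take $K_1=K_2$ a point and $T=I$ on $X=Y=\mathds{C}^2$. Every operator preserves disjointness, and two nonzero functionals in $X'$ are never disjoint, so the right side of~(\ref{eq6}) has rank at most $1$. The paper's case (b), with $F_1=H_1+\dots+H_{k(1)}$, has the same defect.

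\textbf{What your ingredients do prove.} They give $(3)\Rightarrow(4)$ with each term replaced by $\sum_{j\le k(n)}F_{j,n}\otimes y_{j,n}$, where $f'F_{j,n}=f(k_n)F_{j,n}$.
\begin{itemize}
\item Pass to $Y''$ so that Theorem~\ref{t5} applies.
\item Let $R_n$ be the band projection onto the union of the supports of the (at most $p$) elements $z_{j,m}$ attached to the point $k_n$. Then $R_nT$ has rank $\le p$.
\item Put $Q_N=I-\sum_{n\le N}R_n$. Then $\|Q_N\|\le1$ and $Q_N\to0$ pointwise on $T(X_0)$, hence on $\overline{TX}$.
\item If $T$ is compact, this convergence is uniform on the compact set $\overline{T(B_X)}$ ($B_X$ the unit ball), i.e.\ $\|Q_NT\|\to0$.
\end{itemize}
Compactness of $T$ is exactly what replaces the norm estimate you were missing, and under (1) alone it is not available.
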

  
    \begin{proof} We will first assume that the Banach $C(K_2)$-module $Y$ is a Kaplansky module. The implications $(4) \Rightarrow (3) \Rightarrow (2) \Rightarrow (1)$ are trivial. We only need to prove that $(1) \Rightarrow (4)$.
    
    By Theorem~\ref{t3} we have
        \begin{equation}\label{eq7}
      T:X(x_i) \rightarrow Y(Tx_i) = \sum \limits_{n=1}^\infty G_{i,n} \otimes z_{i,n}, 1 \leq i \leq p,
    \end{equation}
    where for any $i, 1 \leq i \leq p$ the functionals $G_{i,n}, n \in \mathds{N}$ are pairwise disjoint atoms in $X(x_i)^\prime$, 
    $z_{i,n}, n \in \mathds{N}$, are pairwise disjoint elements of the Banach lattice $Y(Tx_i)$, and the series in~(\ref{eq7}) converge in the operator norm.
    
    Let $E = \bigcup \limits_{j=1}^p supp \, Tx_i$ and let $\varphi : E \rightarrow K_1$ be the map from the statement of Theorem~\ref{t5}. We claim that for any $n \in \mathds{N}$ and for any $i, 1 \leq i \leq p$, the set $\varphi(supp \, z_{i,n})$ consists of a single point $s_{i,n} \in K_1$. To prove the claim let us fix an $n \in \mathds{N}$ and an $i, 1 \leq i \leq p$.
    Let $P_{i,n}$ be the projection in the center of the Dedekind complete Banach lattice $Y(z_{i,n})$ corresponding to the function
    $\chi_{supp \, z_{i,n}}$. For any $f \in C(K)$ we have
    $P_{i,n}fx_i = \langle f^\prime G_{i,n}, x_i\rangle z_{i,n}$.
    Because $|f^\prime G_{i,n}| \leq \|f\| |G_{i,n}|$ and $G_{i,n}$ is
    an atom in $X(x_i)^\prime$ we have
     $f^\prime G_{i,n} = \lambda(f)G_{i,n}$, where $\lambda(f) \in \mathds{C}$. Clearly $\lambda(f)$ is a multiplicative bounded functional on $C(K)$ and therefore there is a $s_{i,n} \in K_1$ such that $\lambda(f) = f(s_{i,n})$. On the other hand
     $P_{i,n}Tfx_i = (f \circ \varphi)P_{i,n}Tx_i$. Thus, the restriction of the operator $(f \circ \varphi)P_{i,n}T$ on $X(x_i)$ is one-dimensional and our claim is proved.
     
     Notice that because for a fixed $i$ the atoms $G_{i,n}$ are pairwise disjoint, all the points $s_{i,n}, n \in \mathds{N}$ are distinct.
     
     We can now construct the terms in series~(\ref{eq6}). Let us number the elements $z_{i,n}, n \in \mathds{N}, 1 \leq i \leq p$, in lexicographic order. Consider the element $z_{1,1}$. There are two possibilities:
     
     (a) $s_{1,1} \neq s_{i,n}, (i,n) \neq (1,1)$.In this case we set
     $k(1) = 1$ and $y_{1,1} = z_{1,1}$. To define $F_1$ consider the projection $P_1$ corresponding to the characteristic function of the set $supp \, y_{1,1}$. The operator $P_1T : X \rightarrow Y$ is one-dimensional and therefore there is an $F_1 \in X^\prime$ such that $P_1T = F_1 \otimes y_{1,1}$. Because the element $y_{1,1}$ is disjoint with all the elements $z_{i,n}, (i,n) \neq (1,1)$, it follows from~(\ref{eq7}) that $F_1|X(x_i) =0, i = 2, \ldots, p$. Therefore $f^\prime F_1 = f(s_{1,1})F_1, f \in C(K)$, and by Corollary 7.7.1 in~\cite{AAK} $F_1$ is an atom in the Banach $Z(X^\prime)$-module $X^\prime$. 
     
     (b)  Assume that the set 
     $M = \{(i,n): s_{i,n} = s_{1,1}, (i,n) \neq (1,1)\}$ is not empty.
     Notice that the cardinality $c$ of $M$ can not exceed $p-1$. We put $k(1) = c+1$, $y_{1,1} = z_{1,1}$ and denote the elements of the set $\{z_{i,n}: (i,n) \in M\}$ as $y_{2,1}, \ldots, y_{k(1),1}$. We can assume without loss of generality (by passing to a basis) that the elements $y_{1,1}, \ldots, y_{k(1),1}$ are linearly independent.
     For every $j, 1 \leq j \leq k(1)$, we define the functional $H_j \in X^\prime$ as described in part (a) above. Finally, we put
     $F_1 = H_1 + \ldots + H_{k(1)}$. It is immediate to see that $F_1$ is an atom in $X^\prime$.
     
     This procedure allows us to define the terms of the series~(\ref{eq6}). The convergence of this series in the operator norm follows from the convergence of series in~(\ref{eq7}). 
     
     To get rid of the condition that $Y$ is a Kaplansky module we just need to consider $T$ as an operator from $X$ to 
     $X^{\prime \prime}$ and recall that $X^{\prime \prime}$ considered as $Z(X^{\prime \prime})$-module is a Kaplansky module
     (see~\cite{AAK}).   
      \end{proof}
      
      \section{Some remarks about the spectrum of compact disjointness preserving operators}
      
      We now turn to spectral properties of compact disjointness preserving operators.
      
      In~\cite{Ka} H. Kamowitz proved the following

      \begin{theorem} \label{t7}
      Let $K$ be a compact Hausdorff space and $T$ be a compact weighted composition operator on $C(K)$, $Tf = w(f\circ \varphi)$,
      where $w \in C(K)$ and $\varphi : K \rightarrow K$ is a continuous map. The following statements are equivalent
      \begin{enumerate}
        \item $\lambda \in \sigma(T) \setminus \{0\}$.
        \item There is a $\varphi$-periodic $k \in K$ of the smallest period $p$ such that 
        $\lambda^p =w(k)w(\varphi(k)) \ldots w(\varphi_{p-1}(k))$.
         \end{enumerate} 
      \end{theorem}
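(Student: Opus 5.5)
We prove Theorem~\ref{t7} by reducing compactness of $T$ to the Riesz theory of compact operators and then reading off eigenvectors from the structure given by Theorem~\ref{t2}. By Kamowitz's theorem (Theorem~\ref{t2}), compactness of $T$ forces $w$ to be constant on every component of the open set $U=\{k\in K: |w(k)|>0\}$. Every nonzero point of $\sigma(T)$ is an eigenvalue, so $(1)$ means there is $f\neq 0$ with $w(f\circ\varphi)=\lambda f$. The plan is to prove $(2)\Rightarrow(1)$ by exhibiting a nonzero functional eigenvector of $T'$. We prove $(1)\Rightarrow(2)$ by a periodicity argument on the support of an eigenfunction.

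For $(2)\Rightarrow(1)$, let $k$ be $\varphi$-periodic of smallest period $p$, with $\mu:=w(k)\cdots w(\varphi_{p-1}(k))=\lambda^p\neq 0$, and write $k_j=\varphi_j(k)$. Then $T'\delta_{k_j}=w(k_j)\delta_{k_{j+1}}$, with indices taken mod $p$. On the span of the distinct point masses $\delta_{k_0},\dots,\delta_{k_{p-1}}$, the operator $T'$ therefore acts as a weighted cyclic shift. Its characteristic polynomial is $z^p-\mu$, so every $p$-th root of $\mu$, in particular $\lambda$, is an eigenvalue of $T'$. Hence $\lambda\in\sigma(T')=\sigma(T)$.

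For $(1)\Rightarrow(2)$, take $f\neq0$ with $Tf=\lambda f$. First, iterating the eigen-equation gives $T^nf=\lambda^n f$, and $T^nf(k)=w(k)w(\varphi(k))\cdots w(\varphi_{n-1}(k))\,f(\varphi_n(k))$. Choose $k_0$ with $|f(k_0)|=\|f\|$. Then $|\lambda|^n\|f\|\le |w_n(k_0)|\,\|f\|$, and also $f(\varphi_n(k_0))\neq0$ for all $n$. So the whole forward orbit of $k_0$ stays in the set $\{f\neq0\}$, and on it $w\neq0$ (since $\lambda f = w(f\circ\varphi)$ and $\lambda\neq0$).

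The key step, and the main obstacle, is to show that this orbit is eventually periodic. The natural tool is compactness: the functions $\{T^n f/\lambda^n\}$ all equal $f$, so compactness alone says nothing there. Instead I would use the finite-dimensionality of the eigenspace $\ker(T-\lambda)$ together with the invariance of $\{|f|\ge \|f\|/2\}$-type sets. Concretely, the set $S=\{k: |f(k)|\ge \varepsilon\}$ is compact and contained in $U$, so it meets only finitely many components of $U$, because components of $U$ are open when $w$ is locally constant there. On each such component $w$ is a constant $c$, and $\varphi$ maps a component into a component, since the image of a connected set is connected and lies in $\{f\neq0\}\subset U$. Thus $\varphi$ induces a self-map on a finite set of components, which is eventually periodic. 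The modulus inequality $|f(\varphi(k))|=|\lambda||f(k)|/|w(k)|$ then controls the orbit within these components.

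To pass from periodicity of components to a genuine periodic point I would argue as follows. Pick a cycle of components $C_0\to C_1\to\cdots\to C_{q-1}\to C_0$, with product of constants $c$ satisfying $|c|=|\lambda|^q$. Consider the closed set $\{k\in \overline{C_0}: |f(k)|=\max\}$, on which $\varphi_q$ acts. Combining this with the eigen-relation $f(\varphi_q(k))=\lambda^q f(k)/c$ and the finite-dimensionality of the $\lambda$-eigenspace, I expect to show that $f\circ\varphi_q$ restricted to the cycle spans a finite-dimensional invariant space. This should force the existence of a point fixed by $\varphi_q$, obtained as a limit of the orbit using compactness in $C(K)$. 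Passing to the minimal period $p$ of that point, I would then verify $\lambda^p=w(k)\cdots w(\varphi_{p-1}(k))$ from $f(k)\neq0$ and the iterated eigen-equation. The final identity itself is routine; producing the periodic point is the delicate step, and if the above fails I would instead derive it from the Riesz decomposition, using the fact that the spectral projection for $\lambda$ is a finite-rank operator commuting with $T$.
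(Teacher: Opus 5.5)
Your proof of $(2)\Rightarrow(1)$ is correct. The relation $T'\delta_{k_j}=w(k_j)\delta_{k_{j+1}}$ makes $T'$ a weighted cyclic shift on the span of the $p$ distinct point masses, with characteristic polynomial $z^p-\mu$; no compactness is needed there.

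The gap is in $(1)\Rightarrow(2)$: a periodic point is never actually produced, and the steps leading toward it fail.
\begin{itemize}
\item Components of an open set need not be open, even when $w$ is locally constant (take $K$ the Cantor set and $w\equiv 1$). Also, ``constant on components'' does not give local constancy in the first place.
\item Only $\{f\neq0\}$ is forward $\varphi$-invariant. So there is no induced self-map on finitely many components of $U$.
\item Your last paragraph is a hope rather than an argument, and the ingredients it names cannot suffice. Take an irrational rotation $\varphi$ of the circle and $w\equiv1$: $w$ is constant on the single component of $U$, and $1$ is an eigenvalue of $wC_\varphi$ with one-dimensional eigenspace, yet $\varphi$ has no periodic points.
\end{itemize}
The underlying problem is that you extracted the wrong information from Theorem~\ref{t2}. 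As quoted it cannot be right: for $\varphi=\mathrm{id}$ and $w\equiv1$ on $[0,1]$ it would make $I$ compact. What compactness actually yields is that $\varphi$ is locally constant on $\{w\neq0\}$. Indeed, if $w(k_0)\neq0$ and $\varphi(k)\neq\varphi(k_0)$, a $g$ with $\|g\|\le1$ and $g(\varphi(k_0))=1=-g(\varphi(k))$ gives $|Tg(k)-Tg(k_0)|=|w(k)+w(k_0)|$. For $k$ near $k_0$ this contradicts equicontinuity of the image of the unit ball (Arzel\`a--Ascoli). Hence $F_\varepsilon=\varphi(\{|w|\ge\varepsilon\})$ is finite for every $\varepsilon>0$.

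With this, your orbit idea closes quickly.
\begin{itemize}
\item Fix $0<\varepsilon<|\lambda|$ and $k_0$ with $|f(k_0)|=\|f\|$. Then $|w(k_0)|\ge|\lambda|$, so $\varphi(k_0)\in F_\varepsilon$.
\item All orbit points satisfy $f\neq0$. Whenever $|w(\varphi_n(k_0))|<\varepsilon$, the relation $\lambda f=w\,(f\circ\varphi)$ gives $|f(\varphi_{n+1}(k_0))|>(|\lambda|/\varepsilon)|f(\varphi_n(k_0))|$.
\item Now $|f|\le\|f\|$ everywhere, and $|f|\ge\delta:=\min\{|f(a)|:a\in F_\varepsilon,\ f(a)\neq0\}>0$ at each visit to $F_\varepsilon$. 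So the orbit returns to the finite set $F_\varepsilon$ within a bounded number of steps.
\item Hence some $a\in F_\varepsilon$ is visited twice, i.e.\ is $\varphi$-periodic, and $f(a)\neq0$. If $p$ is its smallest period, $\lambda^pf(a)=(T^pf)(a)=w(a)w(\varphi(a))\cdots w(\varphi_{p-1}(a))f(a)$, which is (2).
\end{itemize}

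For comparison, the paper does not prove Theorem~\ref{t7}; it is quoted from Kamowitz. The paper instead recovers $(1)\Rightarrow(2)$ from Theorem~\ref{t8} and Corollary~\ref{c1}. It uses a Riesz decomposition at a radius $R<|\lambda|$, then passes to a band of the dual on which $T'$ is invertible and disjointness preserving. Theorem 13.10 of \cite{AAK} then gives an atom $F$ with $(T')^pF=\lambda^pF$ and $F,\dots,(T')^{p-1}F$ disjoint atoms. Since atoms of $C(K)'$ are multiples of point masses, this is exactly a point of smallest period $p$. Your fallback remark about the finite-rank Riesz projection points in that direction but is not carried out.
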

      
      In the current section we will prove the following generalization of Theorem~\ref{t7}.

      \begin{theorem} \label{t8}
        Let $X$ be a Banach lattice. Let $T: X \to X$ be a continuous disjointness preserving operator.
        
        Assume that $\lambda \neq 0$, $\lambda$ is an isolated point in $\sigma(T)$ and moreover that $|\lambda|$ is isolated in the set $|\sigma(T)| = \{|\alpha|, \alpha \in \sigma(T)\}$. Assume also that $dim \ker{(\lambda I - T)} < \infty$.
          
           Then there are an atom $F \in X^\prime$ and a $p \in \mathds{N}$ such that $(T^\prime)^pF= \lambda^pF$ and the elements 
          $F, \ldots, (T^\prime)^{p-1}F$ are pairwise disjoint atoms.
             \end{theorem}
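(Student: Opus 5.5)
The plan is to pass to the adjoint and to the Riesz projection at $\lambda$, so that the problem becomes finite dimensional. Then the structure of disjointness preserving operators on the resulting finite-dimensional lattice will produce the atom $F$ and its orbit.

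\emph{Step 1: the spectral subspace at $\lambda$.} Since $\lambda$ is isolated in $\sigma(T)$, let $P$ be the Riesz projection of $T$ at $\lambda$. I would first show that $PX$ is finite dimensional. The standing hypothesis $\dim\ker(\lambda I-T)<\infty$ controls the eigenspace. To exclude an infinite generalized eigenspace I would use the known fact that for a disjointness preserving operator the peripheral part is circularly symmetric. This is exactly where the isolation of $|\lambda|$ in $|\sigma(T)|$ enters. I would take the spectral set $\sigma_\lambda=\sigma(T)\cap\{|\alpha|=|\lambda|\}$, which is clopen in $\sigma(T)$ by the isolation of $|\lambda|$. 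Let $Q$ be the corresponding Riesz projection.

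The key input is the Arendt--Hart type result: $QX$ is a band, or at least a $T$-invariant sublattice on which $T$ is again disjointness preserving. I expect this to follow from the fact that $\lambda^{-1}T$ restricted to the annulus behaves like a lattice isometry up to a multiplicative factor. Once this holds, the rotation invariance of the spectrum on a circle together with the isolation of $\lambda$ forces $\sigma_\lambda$ to be finite, of the form $\lambda\{\text{$p$-th roots of unity}\}$ up to finitely many orbits. Then finiteness of $\dim\ker$ at each point (by symmetry) gives $\dim QX<\infty$.

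\emph{Step 2: finite-dimensional reduction.} Dualizing, $Q'X'$ is a finite-dimensional $T'$-invariant subspace, and $T'$ is disjointness preserving on $X'$. I would show that $Q'X'$ is a sublattice (in fact a band/projection band) of $X'$. A finite-dimensional vector lattice is $\mathds{C}^N$ with atoms as coordinates, and these are atoms of $X'$ because $Q'X'$ is a band. On $\mathds{C}^N$ a disjointness preserving operator with $\lambda$ in its spectrum is a weighted permutation on the atoms, at least on its invertible part. A weighted permutation decomposes into weighted cycles. The eigenvalues of a cycle $F,T'F,\dots,(T')^{p-1}F$ of length $p$ are the $p$-th roots of the product of the weights. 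So there is a cycle whose weight product equals $\lambda^p$. Normalizing $F$ so that $(T')^pF=\lambda^pF$, and choosing $p$ to be the minimal cycle length, yields pairwise disjoint atoms $F,\dots,(T')^{p-1}F$. This is the analogue of Kamowitz's periodic point in Theorem~\ref{t7}.

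\emph{Main obstacle.} The hard step is Step 1: proving that the spectral subspace $QX$ (or $Q'X'$) is a band on which $T$ is disjointness preserving, and that it is finite dimensional. For the band property I would first try to show that $Q=\lim$ of Ces\`aro-type averages of $(\bar\mu T/|\lambda|)^n$ over the peripheral circle. Such averages preserve disjointness-type structure and give order-theoretic projections, and I would use the isolation of $|\lambda|$ to make the other parts of the spectrum decay.
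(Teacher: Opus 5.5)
Your Step 1 is false as formulated, not merely hard, and the finite-dimensional reduction it is meant to deliver does not exist in general. The hypotheses isolate the point $\lambda$, not the peripheral set $\sigma_\lambda=\sigma(T)\cap|\lambda|\mathds{T}$.

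Take $K=[0,1]\times\{a,b\}$, $X=C(K)\oplus\mathds{C}$ and $T(f,c)=(w\cdot f\circ\varphi,\,c)$. Here $\varphi$ swaps $(t,a)$ and $(t,b)$, $w(t,a)=e^{i\pi(t+1/2)}$ and $w(t,b)=1$. Solving $(\mu I-T)f=g$ fibrewise shows that $\mu\in\sigma(T|_{C(K)})$ iff $\mu^2=e^{i\pi(t+1/2)}$ for some $t\in[0,1]$. So $\sigma(T)$ consists of $1$ and the two arcs $\{e^{i\theta}:\theta\in[\pi/4,3\pi/4]\cup[5\pi/4,7\pi/4]\}$. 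Then $\lambda=1$ is isolated, $|\sigma(T)|=\{1\}$ and $\dim\ker(I-T)=1$, yet $\sigma_\lambda$ is infinite and $QX=X$.

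The appeal to rotation invariance is the source of the error. For invertible disjointness preserving operators only the aperiodic part has rotation-invariant spectrum. A part of period $n$ is only invariant under rotation by $n$-th roots of unity, and this allows arcs as here.

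The lattice claims in Step 1 fail as well:
\begin{enumerate}
\item For $T(s,t)=(s,-s)$ on $\mathds{C}^2$, $QX=\mathrm{span}\{(1,-1)\}$, which is not a sublattice.
\item Switching to the Riesz projection $P$ at $\lambda$ alone does not help. For $T(s,t)=(t,s)$ and $\lambda=-1$, both $PX$ and $P'X'$ are spanned by $(1,-1)$.
\item $T'$ need not preserve disjointness on $X'$. For $T(s,t)=(s,s)$ one gets $T'(u,v)=(u+v,0)$; compare Remark~\ref{r3}.
\end{enumerate}
So Step 2 has no finite-dimensional lattice to act on.

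The missing idea, which is what the paper does, is to cut the spectrum only from the inside. One then uses the structure theory of \emph{invertible} disjointness preserving operators rather than finite dimensionality.

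Choose $R<|\lambda|$ with $\sigma(T)\cap R\mathds{T}=\emptyset$; this is where the isolation of $|\lambda|$ enters. Because $|T^nx|=|T|^n|x|$, the spectral subspace $X_1$ for the part of $\sigma(T)$ inside the disk of radius $R$ is a closed ideal. Its annihilator is therefore a $T'$-invariant band in $X'$. On that band $T'$ acts as the adjoint of the operator induced on $X/X_1$, i.e.\ as an invertible disjointness preserving operator.

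The paper then applies Theorem 13.10 of AAK on isolated eigenvalues of such operators. There the periodic/aperiodic decomposition, the isolation of $\lambda$ and $\dim\ker(\lambda I-T)<\infty$ together produce the cycle of atoms $F,\dots,(T')^{p-1}F$. Your weighted-cycle computation correctly describes that conclusion, but the route you propose cannot reach it.
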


        \begin{proof} Let $X$ be a Banach lattice. From our assumptions follows that there is an $R$, $0 < R < |\lambda|$, such that $\sigma(T) \cap R\mathds{T} = \emptyset$, where $\mathds{T}$ denotes the unit circle. Let $X_1$ and $X_2$ be the spectral subspaces corresponding, respectively, to the subsets of $\sigma(T)$ located inside and outside of the disk of radius $R$.
         Then $X_1$ is a closed  lattice ideal in $X$ (see e.g.~\cite{AAK}). The annulator of $X_1$ is a band $Y$ in $X^\prime$ and the restriction of the operator $T^\prime$ on the band $Z = Y^\perp$ is an invertible disjointness preserving operator. It remains to apply Theorem 13.10 from~\cite{AAK}.
        \end{proof}

        \begin{remark} \label{r3}
        It remains an open question whether the statement of Theorem~\ref{t8} holds if the condition that $|\lambda|$ is isolated in $|\sigma(T)|$ is omitted. However, we can show that this condition is unnecessary under the additional assumption that the adjoint operator $T^\prime$ preserves disjointness.
        \end{remark}

        \begin{corollary} \label{c1}
          Let $X$ be a Banach $C(K)$-module, $T:X \rightarrow X$ be a compact disjointness preserving operator, and 
          $\lambda \in \sigma(T) \setminus \{0\}$. Then there are an atom $F \in X^\prime$ and a $p \in \mathds{N}$ such that $(T^\prime)^pF= \lambda^pF$ and the elements 
          $F, \ldots, (T^\prime)^{p-1}F$ are pairwise disjoint atoms.
        \end{corollary}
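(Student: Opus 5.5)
The plan is to reduce Corollary~\ref{c1} to the mechanism behind Theorem~\ref{t8}, using compactness to supply every spectral hypothesis that theorem needs. Since $T$ is compact and $\lambda\neq 0$, the Riesz--Schauder theory gives the following:
\begin{itemize}
\item $\lambda$ is an isolated point of $\sigma(T)$ and an eigenvalue;
\item $\dim\ker(\lambda I-T)<\infty$, and the Riesz projection onto the generalized eigenspace has finite rank;
\item $\sigma(T)\setminus\{0\}$ is at most countable with $0$ as its only possible accumulation point, so the set $|\sigma(T)|\cap[\,|\lambda|/2,\infty)$ is finite and $|\lambda|$ is isolated in $|\sigma(T)|$.
\end{itemize}
Hence all hypotheses of Theorem~\ref{t8} hold except one: $X$ is only a Banach $C(K)$-module, not a Banach lattice. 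The work is therefore to run the proof of Theorem~\ref{t8} in the module setting.

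I will follow the proof of Theorem~\ref{t8} line by line.

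\begin{enumerate}
\item Choose $R$ with $0<R<|\lambda|$ and $\sigma(T)\cap R\mathds{T}=\emptyset$. Split $X=X_1\oplus X_2$ into the spectral subspaces for the parts of $\sigma(T)$ inside and outside the disk of radius $R$. Here $X_2$ is finite-dimensional by compactness.
\item Check that $X_1$ is a closed $C(K)$-submodule, the module analogue of a closed ideal. I would obtain this from Theorem~\ref{t4}: on each cyclic subspace $X(x)$, $T$ acts as a disjointness preserving operator between Banach lattices, so the lattice statement quoted from~\cite{AAK} applies there. Alternatively, one can cite the module version of that fact in~\cite{AAK} directly.
\item Pass to the annihilator $X_1^\perp\subset X^\prime$. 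This is a $Z(X^\prime)$-submodule of dimension equal to $\dim X_2<\infty$.
\item Note that $T^\prime$ restricted to $X_1^\perp$ is invertible, with spectrum equal to $\sigma(T)$ outside the disk, and it preserves disjointness there.
\item Apply Theorem 13.10 of~\cite{AAK}, in its module form, to $T^\prime$ on $X_1^\perp$ and the eigenvalue $\lambda$. This yields an atom $F$ and a period $p$ with $(T^\prime)^pF=\lambda^pF$ and with $F,\dots,(T^\prime)^{p-1}F$ pairwise disjoint atoms.
\end{enumerate}

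A more concrete route avoids Theorem 13.10. By Theorem~\ref{t6}, compactness gives $T=\sum_n F_n\otimes\sum_j y_{j,n}$ with $F_n$ pairwise disjoint atoms in $X^\prime$. So $T^\prime G=\sum_n\langle G,\sum_j y_{j,n}\rangle F_n$, and $T^\prime$ maps $X^\prime$ into the closed span of the atoms $F_n$. Any eigenvector $G$ of $T^\prime$ for $\lambda$ (one exists, since $\lambda\in\sigma(T^\prime)$ is an eigenvalue by compactness) is then a norm-convergent combination $G=\sum c_nF_n$.

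Each atom $F_n$ corresponds to a point $s_n\in K$ via $f^\prime F_n=f(s_n)F_n$. Disjointness preservation of $T^\prime$ on atoms should give $T^\prime F_n=\mu_nF_{\sigma(n)}$, where $\sigma$ is a partial map on indices. Comparing coefficients in $T^\prime G=\lambda G$ then shows that $\sigma$ has a cycle through some index $n$ with $c_n\neq 0$, of length $p$ and with $\mu$-product $\lambda^p$. The reason is that $\sum|c_n|$ cannot be preserved along a chain that escapes to infinity when $\sum_n|c_n|\,\|F_n\|<\infty$ and $|\lambda|>0$. Taking $F=F_n$ on that cycle gives the claim.

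The main obstacle in this route is the step $T^\prime F_n=\mu_nF_{\sigma(n)}$, i.e.\ showing that $T^\prime$ sends atoms to multiples of single atoms. In a module, $T^\prime$ need not preserve disjointness (compare Remark~\ref{r3}). My first attempt would use Theorem~\ref{t5}, after passing to $X^{\prime\prime}$ as in the proof of Theorem~\ref{t6}: the relation $Tf=(f\circ\varphi)T$ gives
\[
f^\prime T^\prime F_n=T^\prime\bigl((f\circ\varphi)^\prime F_n\bigr).
\]
Since $F_n$ is supported at the point $s_n$, this should force $T^\prime F_n$ to be supported at $\varphi(s_n)$. That makes $T^\prime F_n$ an atom, or a combination of at most $p$ atoms sharing one point as in case (b) of the proof of Theorem~\ref{t6}. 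This last complication, several atoms at the same point, is where I expect the most care to be needed. I would handle it by grouping the atoms according to their points and working with the induced finite-dimensional matrices on each fiber.
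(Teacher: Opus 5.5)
Your Riesz--Schauder check is correct: for compact $T$, compactness supplies every spectral hypothesis of Theorem~\ref{t8}. The gap is that neither of your routes reaches a setting where Theorem~\ref{t8}, or anything else actually proved, applies.

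In the first route, step 2 misuses Theorem~\ref{t4}. That theorem only gives $T(X(y))\subset X(Ty)$, which is in general a different cyclic subspace. So $T$ is not an operator on the Banach lattice $X(y)$, and the lattice fact that spectral subspaces are ideals cannot be invoked there. The Riesz projection is built from the resolvent of $T$ on all of $X$, which mixes cyclic subspaces. Steps 4 and 5 assert that $T^\prime$ preserves disjointness on $X_1^\perp$ and appeal to an unspecified ``module form'' of Theorem 13.10 of~\cite{AAK}. Neither claim is justified, and you yourself note that $T^\prime$ need not preserve disjointness.

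The second route rests on Theorem~\ref{t6}, which requires $X$ to be finitely generated; Corollary~\ref{c1} makes no such assumption. Beyond that, the key step $T^\prime F_n=\mu_nF_{\sigma(n)}$ is left open. The cycle extraction also leans on $\sum_n|c_n|\,\|F_n\|<\infty$, which does not follow from norm convergence of a series of disjoint atoms; in $\ell^2$, for instance, such a series converges under square-summability alone.

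The missing idea is a one-line reduction to a cyclic subspace. Take $x\neq 0$ with $Tx=\lambda x$, which exists by compactness. Theorem~\ref{t4} gives $T(X(x))\subset X(Tx)=X(\lambda x)=X(x)$. Hence $T$ restricts to a compact disjointness preserving operator on the Banach lattice $X(x)$, with $\lambda$ as an eigenvalue. Your Riesz--Schauder checks apply verbatim to this restriction, so Theorem~\ref{t8} yields the atom and the cycle in $X(x)^\prime$. Corollary 7.7.1 of~\cite{AAK} is then invoked to pass to atoms of $X^\prime$.

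This is the paper's proof. It never has to redo the spectral-subspace argument in the module setting, and it needs no series representation or finite generation.
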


        \begin{proof}
          We can assume without loss of generality that $\lambda = 1$. Let $x \in X$ be such that $x \neq 0$ and $Tx =x$. Then the restriction of $T$ on $X(x)$ is a compact operator from $X(x)$ into itself. It remains to apply Theorem~\ref{t8} and Corollary 7.7.1 from~\cite{AAK}.
        \end{proof}

        \begin{remark} \label{r2} One should mention that A.W. Wickstead (see~\cite[Corollary 4]{Wi1}) obtained a complete description of the spectrum of a compact lattice homomorphism $T$ on a Banach lattice in the case when the conjugate operator $T^\prime$ also preserves disjointness.          
        \end{remark}

\end{document}